\newcommand{\R}{\mathbb{R}}
\def\conv{\mathrm{conv}}
\DeclareMathOperator*{\argmin}{argmin}
\def\Bern{\mathrm{Bern}}
\newtheorem{assumption}{Assumption}
\newtheorem{assumption2}{Assumption}
\begin{document}

\newcommand\relatedversion{}
\renewcommand\relatedversion{\thanks{This article benefited from the support of the FMJH Program PGMO and from the support to this program from EDF. N.\@ Oudjane was partially supported by the FiME Lab Research Initiative (Institut Europlace de Finance).}} 

\title{\Large Decomposed resolution of finite-state aggregative optimal control problems\relatedversion}
\author{Kang Liu\thanks{CMAP, Ecole Polytechnique, CNRS, Institut Polytechnique de Paris, Palaiseau, France and Inria, Laboratoire des Signaux et des Systèmes, CentraleSupélec, CNRS, Université Paris-Saclay, Gif-sur-Yvette, France,
 {\tt\small kang.liu@polytechnique.edu}}
\and Nadia Oudjane\thanks{OSIRIS department, EDF Lab, Paris-Saclay, France,
 {\tt\small nadia.oudjane@edf.fr}}
\and Laurent Pfeiffer\thanks{Inria, Laboratoire des Signaux et des Systèmes, CentraleSupélec, CNRS, Université Paris-Saclay, Gif-sur-Yvette, France,
 {\tt\small laurent.pfeiffer@inria.fr}}}

\date{}

\maketitle







\begin{abstract} \small\baselineskip=9pt A class of finite-state and discrete-time optimal control problems is introduced. The problems involve a large number of agents with independent dynamics, which interact through an aggregative term in the cost function. The problems are intractable by dynamic programming. We describe and analyze a decomposition method that only necessitates to solve at each iteration small-scale and independent optimal control problems associated with each single agent. When the number of agents is large, the convergence of the method to a nearly optimal solution is ensured, despite the absence of convexity of the problem.
The procedure is based on a method called Stochastic Frank-Wolfe algorithm, designed for general nonconvex aggregative optimization problems. Numerical results are presented, for a toy model of the charging management of a battery fleet.
\end{abstract}

\section{Introduction.}
This article is dedicated to a class of aggregative optimal control problems in discrete time and discrete state space. These problems involve a large number $N$ of agents, indexed by $i=1,\ldots,N$, and time steps ranging over $t=0,1,\ldots,T$.
For any agent $i$, we fix a finite \emph{state set} $S_i$ and a finite \emph{control set} $U_i$. The evolution of agent $i$ is described by \emph{transition functions} $\pi_i^t \colon S_i \times U_i \rightarrow {S}_i$, where $t=0,\ldots,T$. We also fix mappings $U_i^t \colon S_i \rightarrow 2^{U_i}$ describing the feasible controls of the agents: at time $t$, if the agent $i$ is in state $s_i^t$, he can make use of all controls in $U_i^t(s_i^t)$.
The initial state of each agent $i$ is constrained to be in $S^0_i$, a subset of $S_i$.
The problem also involves some functions $f_t\colon \R \rightarrow \R$ which we call \emph{social} cost (at time $t$) and some functions $h_i^t \colon S_i^t \times U_i \rightarrow \R$, which we call \emph{contribution} functions. We also make use of functions $\ell_i^t  \colon S_i^t \times U_i \rightarrow \R$, which we call \emph{individual} costs.
The optimal control problem of interest reads:
\begin{equation}\label{pb:OC}
    \begin{cases}
    \begin{array}{rl}
    \underset{(s,u)}{\inf} & J(s,u) \coloneqq  \sum_{t=0}^{T} f_t \Big(
\frac{1}{N} \sum_{i=1}^N h_i^t(s_i^t,u_i^t)
\Big) \\[0.5em]
  & \qquad \qquad \quad  +\frac{1}{N}
\sum_{i=1}^N 
\sum_{t=0}^{T} \ell_i^t(s_i^t,u_i^t),
 \\[0.7em]
 \text{s.t.}
&
s_i^{t+1}= \pi_i^t(s_i^t,u_i^t), \; 
 u_i^t \in  U_i^t(s_i^t), \; s_i^0 \in S_i^0,
 \\
 & \forall t =0,1,\ldots, T-1, \; i =1,2,\ldots, N,
 \end{array}
    \end{cases}
\end{equation}
where $(s,u)= (s_i^t,u_i^t)_{i=1,\ldots,N}^{t=0,\ldots,T}$. The problem is motivated by energy management problems involving flexibilities: they are small consumption (or production) units, which are able to shift their production or their consumption over time, typically by storing energy. We refer the reader to \cite{hao2014aggregate,Seguret2020}.
In these models, the dynamical systems are usually continuous in time and space, they can however be discretized in problems in the form \eqref{pb:OC}.
We focus here on fully discrete problems for simplicity, but also in order to emphasize the fact that our approach does not require any assumption on the dynamics of the agents: they could result from a discretization of a non-linear system, involving non-smooth terms. We will only require that the functions $f_t$ are convex, with a Lipschitz-continuous gradient.

A standard approach to deal with problem \eqref{pb:OC} relies on the dynamic programming principle (see \cite{bertsekas2012dynamic}), in which a key step is to compute the value function $V\colon \{0,1,\ldots, T\}\times S \rightarrow \mathbb{R}$, where the state space $S$ is defined by $\prod_{i=1}^N S_i$. For our problem, Bellman's equation reads as follows: for any $t \in \{ 0, \ldots T \}$, for any $s \in S$,
\begin{align*}
    V^t(s) = \min_{u \in U^t(s)} \, &  f_t \Big(
\frac{1}{N} \sum_{i=1}^N h_i^t(s_i,u_i) \Big)  + \frac{1}{N}\sum_{i=1}^N \ell^t(s_i,u_i)\\
 &  + V^{t+1}(\pi^t(s,u)),
\end{align*}
where $U^t(s) = \prod_{i=1}^N U^t_i(s_i)$ and where $\pi^t(s,u) = (\pi_i^t(s_i,u_i))_{i=1}^N$. We observe that the complexity of Bellman's equation increases exponentially with $N$; this phenomenon is the well-known curse of dimensionality. 
As a consequence, the dynamic programming approach is not tractable for problem \eqref{pb:OC} when the number of agents $N$ is large. Let us mention here that problem \eqref{pb:OC} can be formulated as a large-scale mixed integer convex program (MICP), that is to say, an optimization problem with integrity constraints which becomes a convex program if the integrity constraints are removed. We refer to \cite{coey2020outer} and to the references therein for the resolution of such problems.
However, the number of variables in the MICP corresponding to our problem is equal to $\sum_{i=1}^N T|S_i||U_i|$ and is quickly prohibitive for solvers such as GUROBI \cite{gurobi2018gurobi} and SCIP \cite{scip2021} as $N$ increases.

Another general approach for solving large-scale optimal control problems (in particular) and large-scale optimization problems with an aggregative structure (in general) relies on decomposition. Decomposition methods are particularly powerful when the associated Lagrangian has a separable structure, in which case the dual criterion takes the form of a sum that can be evaluated in parallel, making the resolution of the dual problem easier, with dual subgradient methods \cite[Chapter 6]{bertsekas1999}, with cutting plane algorithms \cite[Chapter XII, Section 4]{jbhu93} or with the alternating direction of multipliers method (ADMM) \cite{bertsekas2014constrained}.
Such approaches have been successfully applied to stochastic optimal control problem, see for example \cite{barty2010,carpentier2018stochastic,Seguret2020}.
However, the convergence of such methods is only guaranteed in the case of convex optimization problems, in general, which limits their application to linear dynamical systems.

This article provides a full description of a decomposition method that allows to circumvent the curse of dimensionality.
Our method is a stochastic method that can find, given $\epsilon > 0$, a $\mathcal{O}(1/N)$-optimal solution with probability $1-\epsilon$. 
At each iteration, only small-scale optimal control problems associated with each agents need to be solved.
Our method relies on the Stochastic Frank-Wolfe (SFW) algorithm proposed by the authors in the recent work \cite{blopw22}, dedicated to abstract aggregative optimization problems, as defined in \cite{Wang2017}.
The SFW algorithm is a combination of (i) the classical Frank-Wolfe algorithm, which is applied to a relaxed version of the optimization problem, with (ii) a selection method that allows to recover a solution to the original problem. The SFW relies on an oracle, which must solve efficiently some subproblems associated with each agent. It turns out that in the present context, those subproblems are small-scale optimal control problems which can be solved by dynamic programming.
A key point in the analysis of the SFW algorithm is the fact that the relaxed problem is a good approximation of the original problem. We have obtained an estimate for the relaxation gap in \cite{blopw22} of order $\mathcal{O}(1/N)$, improving previous results by Wang in \cite{Wang2017}, when $N$ is large. Let us note that another numerical method for aggregative problems is proposed in that reference, relying on Shapley-Folkman decompositions \cite{Starr1969}. We refer the reader to  \cite[Section 5.1]{blopw22} for a thorough comparison of the two methods.

This article is organized as follows. Sections \ref{sec:abstract} to \ref{sec:sfw} provide the reader with a summary of the SFW algorithm and some of the theoretical findings of the article \cite{blopw22}. In Section \ref{sec:abstract}, we give a general formulation of aggregative optimization problems. Their convex relaxation is introduced in Section \ref{sec:convex}. We decribe the SFW algorithm and its convergence properties in Section \ref{sec:sfw}.
Section \ref{sec:aoc} provides a reformulation of the optimal control problem \eqref{pb:OC} as a general aggregative problem of the form \eqref{pb:aggre} and details the implementation of the SFW algorithm for \eqref{pb:OC}.
We briefly discuss the MICP approach in Section \ref{sec:micp}.
Numerical simulations on the charging management of a battery fleet are presented in Section \ref{sec:app}.

\section{Abstract aggregative problems}
\label{sec:abstract}

The general aggregative optimization problem investigated in \cite{Wang2017} and \cite{blopw22} is given by
\begin{align*} \label{pb:aggre} \tag{P}
& \inf_{x\in \mathcal{X}}J(x) := f(G(x)), \\[0.2em]
& \text{where: } 
\mathcal{X} = \prod_{i=1}^N \mathcal{X}_i
\quad
\text{and}
\quad
G(x)= \frac{1}{N} \sum_{i=1}^{N}g_i(x_i).
\end{align*}
The sets $\mathcal{X}_i$ are given and the maps $g_i$ are defined from $\mathcal{X}_i$ to some Hilbert space $\mathcal{E}$. The function $f$ is defined from $\mathcal{E}$ to $\R$. An interpretation of problem \eqref{pb:aggre} is as follows: $N$ is the number of agents; the agents are indexed by $i$ and each variable $x_i \in \mathcal{X}_i$ corresponds to the decision attributed to agent $i$. The mapping $g_i$ is the contribution of agent $i$ to some common good, defined by $\frac{1}{N} \sum_{i=1}^N g_i(x_i)$. We will refer to it as the aggregate. The function $f$ is the social cost associated with the aggregate. In addition to applications in aggregative optimal control mentioned in the introduction, problem \eqref{pb:aggre} itself has important applications in various domains, such as the \emph{resource allocation problems} \cite{beaude2020privacy,jacquot2018analysis}, supervised learning problems, see \cite{Chizat2018,mei2018mean,mei2019mean}.

We make some non-restrictive structural assumptions on \eqref{pb:aggre}.
The aggregate space $\mathcal{E}$ is assumed to be the Cartesian product of $M$ Hilbert spaces $\mathcal{E}_1,\ldots,\mathcal{E}_M$. Moreover,  we suppose that $f$ is of the form
\begin{equation*}
f(y)
=
\sum_{j=1}^M
f_j(y_j), \quad
\forall y=(y_1,\ldots,y_M) \in \mathcal{E}.
\end{equation*}
The functions $f_j$ are defined from $\mathcal{E}_j$ to $\R$, for $j=1,\ldots,M$. Finally, the contribution mappings $g_i$ are of the form $g_i(x_i)= (g_{ij}(x_i))_{j=1,\ldots,M}$.
Therefore, the cost functional $J$ reads:
\begin{equation*}
J(x)
=
\sum_{j=1}^M
f_j \Bigg(
\frac{1}{N}
\sum_{i=1}^N
g_{ij}(x_i)
\Bigg).
\end{equation*}
Some general notations will be used all along the article: given two subsets $A$ and $B\subseteq\mathcal{E}$, we denote by $A+B$ their Minkowski sum. Given $\lambda \in \R$, we denote $\lambda A$ the set defined by $\{ \lambda a \mid a \in A \}$. We denote by $\conv(A)$ the convex hull of $A$.
Next we introduce the main assumptions of the work.
For any $i=1,\ldots,N$ and for any $j=1,\ldots,M$, we denote
\begin{equation*}
Y_{ij} = \big\{ g_{ij}(x_i) \mid x_i\in\mathcal{X}_i \big\}
\quad
\text{and}
\quad
Y_{j} = \frac{1}{N} \sum_{i=1}^N Y_{ij}.
\end{equation*}

\begin{assumption}\label{ass1}  
 For $i=1,2,\ldots,N$ and $j=1,2\ldots,M$:
\begin{itemize}
\item The range set $Y_{ij}$ has finite diameter $d_{ij}$.
\item The function $f_j $ is $L_j$-Lipschitz on $\conv(Y_j)$.
\item The function $f_j$ is continuously differentiable on a neighborhood of $\conv(Y_{j})$, and $\nabla f_j$ is $\tilde{L}_j-$Lipschitz on $\conv(Y_j)$.
\end{itemize}	
\end{assumption}

We next define two constants $C_0$ and $C_1$ by
\begin{align*}
C_0 = \ & \sum_{j=1}^{M} \Big( L_j
\max_{1 \leq i \leq N} \left\{ d_{ij} \right\} \Big), \\
C_1= \ & \frac{1}{N} \sum_{j=1}^M \Bigg( \tilde{L}_j \sum_{i=1}^N d_{ij}^2 \Bigg).
\end{align*}

\begin{assumption}\label{ass2}
For all $j= 1,\ldots,M$, the function $f_j \colon \mathcal{E}_j\rightarrow \mathbb{R}$ is convex.
\end{assumption}

\begin{assumption} \label{ass3}
For all $i=1,\ldots,N$ and for all $y \in \conv(G(\mathcal{X}))$, the problem
\begin{equation} \label{eq:sub_pb_i}
\inf_{x_i \in \mathcal{X}_i} \, \langle \nabla f(y), g_i(x_i) \rangle
\end{equation}
has at least one solution.
For all $i=1,\ldots,N$, we fix a map $\mathbb{S}_i \colon \conv(G(\mathcal{X})) \mapsto \mathcal{X}_i$ such that, for any $y \in \conv(G(\mathcal{X}))$, $\mathbb{S}_i(y)$ is a solution to \eqref{eq:sub_pb_i}.
\end{assumption}

\section{Convex relaxation}
\label{sec:convex}

We introduce in this section a convex relaxation of problem \eqref{pb:aggre}. It will motivate the stochastic Frank-Wolfe algorithm presented in the following section. The reader only interested in a practical implementation of the algorithm can move to the next section.
We first need to reformulate problem \eqref{pb:aggre}.
Let us define
\begin{equation*}
\mathcal{Y}_i = g_i(\mathcal{X}_i), \quad \forall i=1,\ldots,N,
\quad \text{and}
\quad
\mathcal{Y} = \frac{1}{N} \sum_{i=1}^N
\mathcal{Y}_i.
\end{equation*}
Problem \eqref{pb:aggre} is equivalent to
\begin{equation*}
\inf_{y \in \mathcal{E}}
f(y),
\quad
\text{subject to: }
y \in \mathcal{Y}.
\end{equation*}
Indeed, by definition of $\mathcal{Y}$, any $y \in \mathcal{E}$ lies in $\mathcal{Y}$ if and only if there exists $x \in \mathcal{X}$ such that
$y= \frac{1}{N} \sum_{i=1}^N g_i(x_i).$
For such an $x$, we have $f(y)=J(x)$.
It is natural to consider the following relaxation:
\begin{equation*} \label{pb:relaxed} \tag{PR}
\inf_{y \in \mathcal{E}}
f(y), \quad
\text{subject to: }
y \in \conv(\mathcal{Y}).
\end{equation*}
Under Assumption \ref{ass2}, the relaxed problem is a convex optimization problem.
Let $J^*$ denote the value of problem \eqref{pb:aggre} and let $\mathcal{J}^*$ denote the value of problem \eqref{pb:relaxed}.
We have the following result.

\begin{proposition} \label{prop:gap}
Let Assumption \ref{ass1} hold true. Then
\begin{equation*}
\mathcal{J}^*
\leq
J^{*} \leq \mathcal{J}^{*} + \frac{C_1}{2N}.
\end{equation*}
\end{proposition}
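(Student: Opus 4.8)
The plan is to prove the two inequalities separately. The left one, $\mathcal{J}^* \leq J^*$, is immediate: the reformulation of \eqref{pb:aggre} minimizes $f$ over $\mathcal{Y}$, whereas \eqref{pb:relaxed} minimizes the same $f$ over the larger set $\conv(\mathcal{Y}) \supseteq \mathcal{Y}$, so its value can only be smaller. All the work lies in the right inequality, which I would establish by a randomized selection (probabilistic Shapley--Folkman) argument exploiting convexity together with the Lipschitz continuity of the gradients.

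First I would fix $\varepsilon > 0$ and pick a nearly optimal $\bar{y} \in \conv(\mathcal{Y})$ with $f(\bar{y}) \leq \mathcal{J}^* + \varepsilon$, the $\varepsilon$ being needed because the infimum in \eqref{pb:relaxed} need not be attained. Using the identity $\conv\big(\tfrac1N\sum_i \mathcal{Y}_i\big) = \tfrac1N\sum_i \conv(\mathcal{Y}_i)$, I decompose $\bar{y} = \tfrac1N\sum_{i=1}^N \bar{y}_i$ with each $\bar{y}_i \in \conv(\mathcal{Y}_i)$, and by the very definition of the convex hull each $\bar{y}_i$ is a finite convex combination $\sum_k \lambda_{ik}\, g_i(x_{ik})$ with $x_{ik}\in\mathcal{X}_i$. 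I then introduce independent random variables $X_1,\ldots,X_N$, where $X_i$ takes the value $x_{ik}$ with probability $\lambda_{ik}$, so that $Y_i := g_i(X_i)$ lies in $\mathcal{Y}_i$ almost surely and satisfies $\mathbb{E}[Y_i] = \bar{y}_i$. Writing $Y := \tfrac1N\sum_i Y_i = G(X)$, we have $\mathbb{E}[Y] = \bar{y}$ and $f(Y) = J(X)$ for every realization.

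The core estimate bounds $\mathbb{E}[f(Y)]$ componentwise. For each $j$, both $\bar{y}_j$ and $Y_j$ lie in $\conv(Y_j)$, so the descent lemma, valid because $\nabla f_j$ is $\tilde{L}_j$-Lipschitz there by Assumption \ref{ass1}, gives
\begin{equation*}
f_j(Y_j) \leq f_j(\bar{y}_j) + \langle \nabla f_j(\bar{y}_j),\, Y_j - \bar{y}_j\rangle + \tfrac{\tilde{L}_j}{2}\,\|Y_j - \bar{y}_j\|^2 .
\end{equation*}
Taking expectations kills the linear term since $\mathbb{E}[Y_j] = \bar{y}_j$. The decisive point is that $Y_j - \bar{y}_j = \tfrac1N\sum_i\big(g_{ij}(X_i) - \bar{y}_{ij}\big)$ is an average of independent centered terms, so the cross terms vanish and $\mathbb{E}\|Y_j - \bar{y}_j\|^2 = \tfrac{1}{N^2}\sum_i \mathbb{E}\|g_{ij}(X_i) - \bar{y}_{ij}\|^2 \leq \tfrac{1}{N^2}\sum_i d_{ij}^2$, where I bound each summand by $d_{ij}^2$ since $g_{ij}(X_i)$ and $\bar{y}_{ij}$ both lie in a set of diameter $d_{ij}$. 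Summing over $j$ yields exactly $\mathbb{E}[f(Y)] \leq f(\bar{y}) + \tfrac{C_1}{2N}$.

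To conclude, since $\mathbb{E}[f(Y)] \leq f(\bar{y}) + \tfrac{C_1}{2N} \leq \mathcal{J}^* + \varepsilon + \tfrac{C_1}{2N}$, at least one realization $X(\omega)\in\mathcal{X}$ achieves $J(X(\omega)) = f(Y(\omega)) \leq \mathcal{J}^* + \varepsilon + \tfrac{C_1}{2N}$, hence $J^* \leq \mathcal{J}^* + \varepsilon + \tfrac{C_1}{2N}$; letting $\varepsilon \downarrow 0$ finishes the proof. I expect the main obstacle, and the reason the gap is $\mathcal{O}(1/N)$ rather than the $\mathcal{O}(1)$ of a naive selection bound, to be precisely the variance-of-the-average step: independence across agents is what produces the $1/N^2$ factor, while pairing the convexity of each $f_j$ with its Lipschitz gradient (the descent lemma) is what makes the second-order, squared-diameter control $d_{ij}^2$ available in the first place.
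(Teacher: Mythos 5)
Your proof is correct. Note that the paper itself does not prove the second inequality: it delegates it to \cite[Proposition 2.6]{blopw22}, so what you have produced is a self-contained argument where the paper offers only a citation. Your route is the probabilistic (Maurey-type) version of the Shapley--Folkman selection: decompose a near-minimizer of \eqref{pb:relaxed} agent by agent using $\conv\big(\tfrac1N\sum_i\mathcal{Y}_i\big)=\tfrac1N\sum_i\conv(\mathcal{Y}_i)$, randomize independently within each agent's convex combination, and use the descent lemma plus the vanishing of cross-terms to get the $1/N^2$ variance factor that turns a naive $\mathcal{O}(1)$ selection bound into $\tfrac{C_1}{2N}$; the constant you obtain matches the paper's $C_1$ exactly. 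Two small remarks. First, your closing sentence attributes part of the estimate to the convexity of the $f_j$, but your argument never uses Assumption \ref{ass2}: the descent lemma requires only the $\tilde L_j$-Lipschitz gradient from Assumption \ref{ass1}, which is consistent with the proposition's hypotheses (it assumes only Assumption \ref{ass1}), so you should drop that claim rather than appear to smuggle in an unstated hypothesis. Second, beware the notational clash: the paper already uses $Y_j$ for the \emph{set} $\tfrac1N\sum_i Y_{ij}$, while you reuse it for the $j$-th component of your random aggregate; renaming the random variable would avoid confusion, and when you bound $\mathbb{E}\|g_{ij}(X_i)-\bar y_{ij}\|^2\le d_{ij}^2$ you are implicitly using that $\conv(Y_{ij})$ has the same diameter as $Y_{ij}$, which is true but worth stating.
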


\begin{proof}
The first inequality is straightforward, and the second one is proved in \cite[Proposition 2.6]{blopw22}.
\end{proof}

Let us mention that a more precise upper bound is given in \cite[Theorem 4.4]{blopw22}. The result of Proposition \ref{prop:gap} is to be related to Shapley-Folkman's lemma \cite{Starr1969} and more precisely to Starr's corollary \cite{Starr1969}, which gives a bound of the distance to $\mathcal{Y}$ of a point in $\conv(\mathcal{Y})$.
Assuming that the coefficients $d_{ij}$ (appearing in Assumption \ref{ass1}) are uniformly bounded, we see that $C_1$ is also bounded, and thus the gap estimate $\frac{C_1}{2N}$ goes to zero as $N$ goes to infinity. In words, there is a convexification of the problem as the number of agents increases.

\begin{Remark} \label{rem:mfg}
Consider the particular case where the sets $\mathcal{X}_i$ and the contribution functions $g_i$ do not depend on $i$. Then, $\mathcal{Y}_1= \ldots = \mathcal{Y}_N$. It follows that
\begin{align*}
\conv(\mathcal{Y})= \ &
\conv \Bigg( \frac{1}{N} \sum_{i=1}^N \mathcal{Y}_1 \Bigg)
= \frac{1}{N} \sum_{i=1}^N \conv(\mathcal{Y}_1) \\
= \ & \conv(\mathcal{Y}_1).
\end{align*}
In this case, the relaxed problem does not depend on $N$, it can be interpreted as a mean-field relaxation, i.e., it can be interpreted as the limit problem as the number of agents $N \to \infty$.
\end{Remark}

From Proposition~\ref{prop:gap}, we note that any $\epsilon$-solution of problem~\eqref{pb:relaxed}  is an $\epsilon$-solution of problem~\eqref{pb:aggre} as soon as it is feasible for problem~\eqref{pb:aggre}.
Since problem \eqref{pb:relaxed} is convex, it is easier to handle numerically. Algorithm \ref{alg:fw_geo} generates a minimizing sequence $(y^k)_{y \in \mathbb{N}}$ in $\conv(\mathcal{Y})$ for the relaxed problem, by a direct application of the Frank-Wolfe algorithm \cite{Dunn1978}. 
The general idea of Algorithm \ref{alg1+k} is to introduce an approximation step at each iteration, to recover points in $\mathcal{Y}$.

\begin{algorithm}[htb]
\SetAlgoLined
Initialization: $y^0 \in \conv(\mathcal{Y})$\;
\For{$k= 0,1,2,\ldots$}{
Find a solution $\bar{y}^k$ to the sub-problem
\begin{equation} \label{eq:sub_pb_algo}
\inf_{y \in \conv(\mathcal{Y})}
\langle \nabla f(y^k), y \rangle
\end{equation}

Choose $\omega_k \in [0,1]$\;
Set $y^{k+1} = (1- \omega_k) y^k + \omega_k \bar{y}^k$\;
}
\caption{Frank-Wolfe Algorithm for the relaxed problem}
\label{alg:fw_geo}
\end{algorithm}

The algorithm is known to converge for various choices of the parameter $\omega_k$. In particular, for $\omega_k= 2/(k+2)$, one can show the existence of a constant $C>0$ such that for any $k$, $f(y^k)
\leq \mathcal{J}^*
+ \frac{C}{k}$.
Besides the guaranty of convergence of the algorithm, its interest lies in the decomposability of the sub-problems to be solved at each iteration. Problem \eqref{eq:sub_pb_algo} is indeed equivalent to
\begin{equation} \label{eq:sub-problem}
\inf_{x \in \mathcal{X}}
\Big\langle \nabla f(y^k), \sum_{i=1}^N g_i(x_i)
\Big\rangle.
\end{equation}
Obviously, $x$ is a solution if and only if $x_i$ is a solution to \eqref{eq:sub_pb_i} (with $y=y^k$). Therefore a solution to \eqref{eq:sub_pb_algo} is given by
\begin{equation}
\bar{y}^k
=
\frac{1}{N} \sum_{i=1}^N g_i \big( \mathbb{S}_i(y^k) \big).
\end{equation}
Note that $\bar{y}^k \in \mathcal{Y}$. However, even if $y^k$ also belonged to $\mathcal{Y}$, there would be no reason to have $y^{k+1} \in \mathcal{Y}$.

The stochastic Frank-Wolfe algorithm (Algorithm \ref{alg1+k}) introduced in the next section allows us to overcome this difficulty. At the iteration $k$, a point $x^k$ has been constructed, with aggregate $y^k= \frac{1}{N} \sum_{i=1}^N g_i(x_i^k)$. The same sub-problems are solved, yielding a point $\bar{x}^k= (\mathbb{S}_1(y^k),\ldots,\mathbb{S}_N(y^k))$ with aggregate $\bar{y}^k$. Next, the algorithm generates a sample of $n_k$ points independently and identically distributed (i.i.d.) in $\mathcal{X}$, denoted $\hat{x}^{k,j}= (\hat{x}_i^{k,j})_{i=1,\ldots,N}$, with $j=1,\ldots,n_k$.
The point $\hat{x}_i^{k,j}$ 
is equal to $x_i^k$ with probability $1-\omega_k$ and to $\bar{x}_i^{k,j}$ with probability $\omega_k$. 
In practice, we simulate $Nn_k$ i.i.d. random variables $\lambda_i^{k,j}\sim \Bern(\omega_k)$, where $\Bern(\omega_k)$ denotes the Bernoulli distribution of parameter $\omega_k \in [0,1]$ and we set
\begin{equation*}
\hat{x}_i^{k,j}
= (1-\lambda_i^{k,j}) x_i^k
+ \lambda_i^{k,j} \bar{x}_i^k.
\end{equation*}
Then $x^{k+1}$ is taken as a minimizer of $J$ over the union 
of the set of points randomly generated and $\{ x^k \}$.
\section{Stochastic Frank-Wolfe algorithm}
\label{sec:sfw}

We provide in Algorithm \ref{alg1+k} an explicit implementation of our stochastic Frank-Wolfe algorithm. 

\begin{algorithm}[htb]
\SetAlgoLined
Initialization: $x^0 \in \mathcal{X}$\;
\For{$k= 0,1,2,\ldots$}{
\medskip
\textbf{Step 1: Resolution of the sub-problems.}\\
Compute $y^k= \frac{1}{N} \sum_{i=1}^N g_i(x_i^k)$\;
\For{$i=1,2,\ldots,N$}{
Compute $\bar{x}_i^k = \mathbb{S}_i (y^k) $\;
}
\medskip
\textbf{Step 2: Update.} \\
Choose $n_k \in \mathbb{N}^*$ and $\omega_k \in [0,1]$\;
\For{$j=1,2,\ldots,n_k$}{
\For{$i=1,2,\ldots,N$}{
Simulate $\lambda^{k,j}_i \sim \Bern(\omega_k)$, independently of all previously defined random variables\;
Set $\hat{x}_i^{k,j} = (1-\lambda_i^{k,j})x_i^k + \lambda_i^{k,j} \bar{x}_i^k$\;
}
Set $\hat{x}^{k,j}= (\hat{x}^{k,j}_i)_{i=1,\ldots,N}$\;
}
Find $x^{k+1} \in \argmin \{ J(x) \, \big| \, x \in X^k \}$,
where
$X^k= \{ \hat{x}^{k,j},\, j=1,2,\ldots,n_k \} \cup \{ x^k \}$\;
\medskip
}
\caption{Stochastic Frank-Wolfe Algorithm }
\label{alg1+k}
\end{algorithm}

We have the following result, proved in  \cite[Theorem 3.7]{blopw22}.

\begin{theorem} \label{thm:prob}
Let Assumptions \ref{ass1}, \ref{ass2}, and \ref{ass3} hold true.
Assume that $\omega_k= \frac{2}{k+2}$, for all $k \in \mathbb{N}$ in Algorithm \ref{alg1+k}. Then, for all $K =1, \ldots, 2N$, 
\begin{equation*}
\mathbb{E} [\gamma_K] \leq \frac{4C_1}{K}, \quad \text{where $\gamma_K= J(x^K)- \mathcal{J}^*$}. 
\end{equation*}
Moreover, for all $\epsilon > 0$,
\begin{equation*}
\mathbb{P} \Big[
\gamma_K < \frac{4C_1}{K} + \epsilon
\Big] \geq 1 - \exp \left( \frac{-\varepsilon^2 N}{2(v_K + \epsilon m_K/3)} \right),
\end{equation*}
where the constants $m_K$ and $v_K$ are given by
\begin{align*}
v_K= \ & \frac{2 C_0^2}{K^2 (K+1)^2} \ \Bigg(
{\displaystyle \sum_{k=1}^{K-1}}
\frac{k(k+1)^2}{n_k} \Bigg) \\
m_K= \ & \frac{C_0}{K(K+1)} \
\Big( \,
{\displaystyle \max_{k=1,\ldots,K-1}
}
\frac{(k+1)(k+2)}{n_k}
\Big).
\end{align*}
\end{theorem}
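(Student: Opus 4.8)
The plan is to reduce the stochastic iteration to the deterministic Frank--Wolfe method applied to the relaxed problem \eqref{pb:relaxed}, and to overlay a martingale concentration argument exploiting the independence of the Bernoulli variables across the $N$ agents. I would fix the filtration $\mathcal{F}_k$ generated by all randomness produced before iteration $k$, and abbreviate $y^k=\frac1N\sum_i g_i(x_i^k)$, $\bar y^k=\frac1N\sum_i g_i(\bar x_i^k)$ and $\delta_i^k=g_i(\bar x_i^k)-g_i(x_i^k)$, so that $\hat y^{k,j}-y^k=\frac1N\sum_i\lambda_i^{k,j}\delta_i^k$ and $\gamma_k=f(y^k)-\mathcal J^*$. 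The cornerstone is the identity $\mathbb E[\hat y^{k,j}\mid\mathcal F_k]=(1-\omega_k)y^k+\omega_k\bar y^k$: the expected aggregate of each random sample is exactly the Frank--Wolfe step on \eqref{pb:relaxed}, which is what lets the classical Frank--Wolfe rate drive the whole estimate.

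Next I would establish a one-step recursion. The separable smoothness in Assumption~\ref{ass1} yields, coordinatewise, a descent inequality $f(\hat y^{k,j})\le f(y^k)+\frac1N\sum_i\lambda_i^{k,j}\langle\nabla f(y^k),\delta_i^k\rangle+\frac12\sum_{j'}\tilde L_{j'}\|\hat y^{k,j}_{j'}-y^k_{j'}\|^2$ for each sample. The selection step gives $\gamma_{k+1}\le\frac1{n_k}\sum_j(f(\hat y^{k,j})-\mathcal J^*)$ — the minimum lies below the average, which is the mechanism by which the $1/n_k$ variance reduction will enter. Averaging over $j$ and taking the conditional expectation, the linear term becomes $\omega_k\langle\nabla f(y^k),\bar y^k-y^k\rangle$, which by Assumption~\ref{ass2} and the optimality of $\bar y^k$ for the linearized subproblem is at most $-\omega_k\gamma_k$; a second-moment computation of the quadratic term, retaining the Bernoulli variance $\omega_k(1-\omega_k)$ and using $\sum_{j'}\tilde L_{j'}\|\bar y^k_{j'}-y^k_{j'}\|^2\le C_1$, bounds its conditional expectation by $\frac{\omega_k(1-\omega_k)}{2N}C_1+\frac{\omega_k^2}{2}C_1$. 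This gives
$$\mathbb E[\gamma_{k+1}]\le(1-\omega_k)\mathbb E[\gamma_k]+\tfrac{\omega_k^2}{2}C_1+\tfrac{\omega_k C_1}{2N}.$$

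For the bound on $\mathbb E[\gamma_K]$ I would telescope. With $\omega_k=2/(k+2)$ one has $\prod_{l=k+1}^{K-1}(1-\omega_l)=\frac{(k+1)(k+2)}{K(K+1)}$, so $\mathbb E[\gamma_K]\le\sum_{k=0}^{K-1}\frac{(k+1)(k+2)}{K(K+1)}\big(\frac{\omega_k^2}{2}C_1+\frac{\omega_k C_1}{2N}\big)$. The first group sums to the classical $\mathcal O(C_1/K)$ Frank--Wolfe rate, while the second sums to exactly $\frac{C_1}{2N}$, which is bounded by $C_1/K$ precisely because $K\le 2N$. Adding the two contributions gives $\mathbb E[\gamma_K]\le 4C_1/K$; this is the step where the restriction $K\le 2N$ is indispensable.

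The concentration bound is the delicate part. I would keep the recursion pathwise, $\gamma_{k+1}\le(1-\omega_k)\gamma_k+\Delta_k+R_k$, where $\Delta_k=\frac1{n_k}\sum_j\frac1N\sum_i(\lambda_i^{k,j}-\omega_k)\langle\nabla f(y^k),\delta_i^k\rangle$ is a martingale difference and $R_k$ collects the deterministic remainder. Telescoping with the weights $w_k=\frac{(k+1)(k+2)}{K(K+1)}$ expresses an upper bound on $\gamma_K-4C_1/K$ as a single martingale $\sum_k w_k\Delta_k$. Revealing the Bernoulli variables agent by agent, and using $|\langle\nabla f(y^k),\delta_i^k\rangle|\le\sum_{j'}L_{j'}d_{ij'}\le C_0$, the individual increments are bounded by $m_K/N$, while the total conditional variance, with $\omega_k(1-\omega_k)=2k/(k+2)^2$ producing the weight $k(k+1)^2$ and $1/n_k$ the variance reduction, equals $v_K/N$. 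Freedman's martingale Bernstein inequality then yields $\mathbb P[\gamma_K\ge 4C_1/K+\epsilon]\le\exp(-\epsilon^2 N/(2(v_K+\epsilon m_K/3)))$. The main obstacles I anticipate are: incorporating the fluctuation of the quadratic remainder $R_k$ into, or dominating it by, the linear martingale so that only $C_0$ survives in $v_K$ and $m_K$; justifying the agent-by-agent revelation so that both the increment bound $m_K/N$ and the variance $v_K/N$ emerge with the stated constants; and the exact bookkeeping of the telescoping weights, where keeping $\omega_k(1-\omega_k)$ rather than a cruder bound is exactly what reproduces $v_K$.
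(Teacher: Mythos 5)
The paper does not reprove this theorem (it cites \cite[Theorem 3.7]{blopw22}), so I assess your outline on its own terms. The expectation bound is correct and essentially complete: the identity $\mathbb{E}[\hat y^{k,j}\mid\mathcal F_k]=(1-\omega_k)y^k+\omega_k\bar y^k$, the averaging bound $\gamma_{k+1}\le\frac1{n_k}\sum_j\bigl(f(\hat y^{k,j})-\mathcal J^*\bigr)$, the second-moment computation producing the extra $\frac{\omega_k(1-\omega_k)}{2N}C_1$ term, and the telescoping with weights $w_k=\frac{(k+1)(k+2)}{K(K+1)}$, whose second sum equals $\frac{C_1}{2N}\le\frac{C_1}{K}$ exactly because $K\le 2N$, together give $\mathbb{E}[\gamma_K]\le\frac{2C_1}{K+1}+\frac{C_1}{2N}\le\frac{4C_1}{K}$.

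The gap is in the concentration argument, and it is the one you flag yourself. Your pathwise recursion $\gamma_{k+1}\le(1-\omega_k)\gamma_k+\Delta_k+R_k$ takes $\Delta_k$ to be the fluctuation of the linearized cost only and calls $R_k$ a ``deterministic remainder''; but the quadratic remainder $\frac{1}{2n_k}\sum_j\sum_{j'}\tilde L_{j'}\|\hat y^{k,j}_{j'}-y^k_{j'}\|^2$ is itself random. Only its conditional expectation is of order $\frac{\omega_k^2}{2}C_1+\frac{\omega_k(1-\omega_k)}{2N}C_1$; its pathwise worst case (all $\lambda_i^{k,j}=1$) is of order $\sum_{j'}\tilde L_{j'}\bigl(\frac1N\sum_i d_{ij'}\bigr)^2$, independent of $\omega_k$, which would destroy the rate, and controlling its deviation separately would inject $\tilde L$-dependent terms into $v_K$ and $m_K$ that the theorem does not contain. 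The repair is not to split linear and quadratic parts at all: set $Z_k=\frac1{n_k}\sum_j f(\hat y^{k,j})$ and write $\gamma_{k+1}\le\bigl(\mathbb{E}[Z_k\mid\mathcal F_k]-\mathcal J^*\bigr)+e_k$ with $e_k=Z_k-\mathbb{E}[Z_k\mid\mathcal F_k]$. The first term is $\mathcal F_k$-measurable and is bounded by your expectation recursion; the fluctuation $e_k$ is a function of the $Nn_k$ independent Bernoulli variables whose Doob martingale (revealing the $\lambda_i^{k,j}$ one at a time) has increments bounded by $C_0/(Nn_k)$, since changing one agent's decision moves $\hat y^{k,j}_{j'}$ by at most $d_{ij'}/N$ and hence $f(\hat y^{k,j})$ by at most $\sum_{j'}L_{j'}d_{ij'}/N\le C_0/N$; this is precisely why the Lipschitz constants $L_{j'}$, and not $\tilde L_{j'}$, appear in $m_K$ and $v_K$. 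Each increment has conditional variance at most $\omega_k(1-\omega_k)C_0^2/(Nn_k)^2$, and with the weights $w_k$ (noting $\omega_0(1-\omega_0)=0$, so the sum starts at $k=1$) this reproduces exactly the stated $m_K/N$ and $v_K/N$, after which Freedman's inequality closes the proof. Your bookkeeping of the constants $m_K$ and $v_K$ via $w_k^2\omega_k(1-\omega_k)=\frac{2k(k+1)^2}{K^2(K+1)^2}$ is otherwise exactly right.
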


Note that the constants $v_K$ and $m_K$ can be made arbitrarily small by choosing sufficiently large values of $(n_k)_{k=0,\ldots,K}$. Thus for arbitrarily small values of $\epsilon > 0$ and $\epsilon' > 0$, one can choose appropriate numbers of random simulations so that
$\mathbb{P} \big[
\gamma_{2N} < \frac{2C_1}{N} + \epsilon
\big] \geq 1 - \epsilon'$.

\begin{Remark}
Theorem \ref{thm:prob} focuses on the choice of stepsize $\omega_k= 2/(k+2)$, which we have utilized in the numerical simulations. It is also possible to determine $\omega_k$ by line search, see \cite[Remark 3.10]{blopw22}.
\end{Remark}

\section{Aggregative optimal control}
\label{sec:aoc}

In this section, we reformulate optimal control problem \eqref{pb:OC} as a problem of the form \eqref{pb:aggre}. We address its resolution with the SFW algorithm.

\subsection{Abstract formulation}

Let us consider an agent $i$ and let us describe its state-control feasible set $\mathcal{X}_i$. Recall that a state $S_i$, a control set $U_i$, mappings $U_i^t \colon S_i \rightarrow 2^{U_i}$, and transition mapping $\pi_i^t \colon S_i \times U_i \rightarrow S_i$ are given. We call feasible \emph{state-control trajectory} an element $x_i = (s_i,u_i)$, where $s_i = (s_i^0,\ldots,s_i^T) \in ({S}_i)^{T+1}$ and $u_i= (u_i^0,\ldots,u_i^{T}) \in ({U}_i)^{T+1}$, such that
\begin{equation*}
s_i^0 \in S_i^0, \quad
u_i^t \in U_i^t(s_i^t), \quad
s_i^{\theta+1}= \pi_i^\theta(s_i^\theta,u_i^\theta),
\end{equation*}
for any $t= 0, \ldots, T$ and any $\theta= 0,\ldots,T-1$.
We denote by $\mathcal{X}_i$ the set of feasible state-control trajectories. We set $\mathcal{X}= \prod_{i=1}^{N^{\phantom{h}}} \mathcal{X}_i$.
The non-emptyness of $\mathcal{X}_i$ is a straightforward consequence of the following assumption.

\begin{assumption2} \label{ass:non-empty}
The set $S_i^0$ is non-empty. For all $t=0,\ldots,T$, for all $s_i^t \in S_i$, the set $U_i^t(s_i^t)$ is non-empty.
\end{assumption2}

With each agent $i$ are associated $(T+1)$ contribution functions $h_i^t$, $t=0,\ldots,T$ and $(T+1)$ individual costs $\ell_i^t$, $t=0,\ldots,T$. We set $\mathcal{E}_0=\ldots \mathcal{E}_{T+1}=\R$ and we define $T+2$ functions $g_{it} \colon \mathcal{X}_i \rightarrow \mathcal{E}_t$ by
\begin{equation*}
g_{it}(x_i)
= \begin{cases}
\begin{array}{ll}
h_{i}^t(s_i^t,u_i^t) & \text{ if $t \leq T$} \\
\sum_{t'=0}^T \ell_i^{t'}(s_i^{t'},u_i^{t'}) & \text{ if $t=T+1$}.
\end{array}
\end{cases}
\end{equation*}
The social costs $f_0,\ldots,f_T$ are the same as in the original problem \eqref{pb:OC}. The social cost $f_{T+1} \colon \mathcal{E}_{T+1} \rightarrow \R$ is the identity function.
With these definitions, problem \eqref{pb:OC} is equivalent to 
\begin{align} \label{eq:oc}
\inf_{(x_i)_{i=1}^N \in \prod_{i=1}^N \mathcal{X}_i}  \ \ \sum_{t=0}^{T+1}
f_t \Big( \frac{1}{N} \sum_{i=1}^N g_{it}(x_i) \Big).
\end{align}

\subsection{Assumptions}

As before, we denote $g_i(x_i)= (g_{it}(x_i))_{t=0,\ldots,T+1}$, $\mathcal{E}= \prod_{t=0}^{T+1} \mathcal{E}_t = \R^{T+2}$ and for $y \in \mathcal{E}$, $f(y)= \sum_{t=0}^{T+1} f_t(y_t)$.
For any $i=1,\ldots,N$ and for any $t=0,\ldots,T+2$, we denote
\begin{equation*}
Y_{it} = \big\{ g_{it}(x_i) \mid x_i \in \mathcal{X}_i \big\}
\quad \text{and} \quad
Y_{t} = \frac{1}{N} \sum_{i=1}^N Y_{it}.
\end{equation*}

\begin{assumption2}\label{ass:all_assumptions}  
For $i=1,2,\ldots,N$ and for $t=0,1,\ldots,T$,
\begin{itemize}
\item $f_t $ is $L_t$-Lipschitz on $\conv(Y_t)$,
\item $f_t$ is continuously differentiable on a neighborhood of $\conv(Y_t)$, $\nabla f_t$ is $\tilde{L}_t$-Lipschitz on $\conv(Y_t)$
\item $f_t$ is convex on $\conv(Y_t)$.
\end{itemize}
\end{assumption2}

Assumptions \ref{ass:non-empty} and \ref{ass:all_assumptions} imply Assumptions \ref{ass1} and \ref{ass2} for problem \eqref{eq:oc}. Assumption \ref{ass3} is trivially satisfied since $\mathcal{X}_i$ is a finite set.

\subsection{Resolution of the sub-problems}

We explain now how to solve the sub-problems \eqref{eq:sub_pb_i} associated with the aggregative optimal control problem \eqref{eq:oc}.
Let $y \in \mathcal{E}$. Let $\mu \in \mathcal{E}$ be defined by
$\mu^t= \nabla f_t (y^t)$.
By definition of $f_{T+1}$, $\mu^{T+1}=1$.
The sub-problem \eqref{eq:sub_pb_i} reads:
\begin{align} \label{eq:sub_pb_oc}
\inf_{x_i \in \mathcal{X}_i} \
 \sum_{t=0}^{T} \Big(
\ell_i^t(s_i^t,u_i^t) +
\langle \mu^t, h_i^t(s_i^t,u_i^t) \rangle \Big).
\end{align}
The sub-problem \eqref{eq:sub_pb_oc} can be solved by dynamic programming. Algorithm \ref{algo:dyn_prog} yields a solution to \eqref{eq:sub_pb_oc}. For convenience, we denote
\begin{equation*}
\ell_i^t[\mu^t](s_i^t,u_i^t)
= \ell_i^t(s_i^t,u_i^t) +
\langle \mu^t, h_i^t(s_i^t,u_i^t) \rangle
\end{equation*}
in the algorithm.
The algorithm consists of two steps: first in a backward pass, a sequence of value functions $(V_i^t)_{t=0,\ldots,T+1}$ is computed, where $V_i^t \colon S_i \rightarrow \R$.
A globally optimal solution is obtained in a forward pass. Note that the value of the optimization problem of Step 1 is finite as a consequence of Assumption \ref{ass:non-empty}.
\begin{algorithm}[htb]
\SetAlgoLined

\medskip

\textbf{Step 1: Backward pass.} \\
Set $V_i^{T+1}(s_i^{T+1})=0$, for any $s_i^{T+1} \in S_i$. \\
\For{$t=T,T-1,\ldots,0$}{
\For{$s_i^t \in S_i$}{
Define $V_i^t(s_i^t)$ as
\begin{align*} 
\hspace{-3mm}
\begin{array}{rl}
{\displaystyle \min_{u_i^t \in U_i^t(s_i^t)}} & \! \! \! \!
\ell_i^t[\mu^t](s_i^t,\cdot)
+ V_i^{t+1} \big(\pi_i^t(s_i^t,\cdot) \big).
\end{array}
\end{align*}
}
}

\medskip

\textbf{Step 2: Forward pass.} \\
Find
$\bar{s}_i^0 \in
\underset{s_i^0 \in S_i^0}{\text{argmin}} \,
V_i^0(s_i^0)$\;
\For{$t=0,\ldots,T$}{
Find a solution $\bar{u}_i^t$ to the problem
\begin{align}
\hspace{-3mm}
\begin{array}{rl}
{\displaystyle \min_{U_i^t(\bar{s}_i^t,u_i^t)}} & \! \! \! \!
\ell_i^t[\mu^t](\bar{s}_i^t,\cdot)
+ V_i^{t+1} \big(\pi_i^t(\bar{s}_i^t,\cdot) \big).
\end{array}
\notag
\end{align}
If $t<T$, set $\bar{s}_i^{t+1}= \pi_i^t(\bar{s}_i^t, \bar{u}_i^t)$.
}

\medskip

\caption{Dynamic programming algorithm}
\label{algo:dyn_prog}
\end{algorithm}

\begin{Remark}
As mentioned in the introduction, problem \eqref{eq:oc} could be addressed by dynamic programming. This would allow the computation of an exact solution. However, this would require to compute a value function of the form $V^t(s^t)$, where $s^t= (s_1^t,\ldots,s_N^t) \in \prod_{i=1}^N S_i$. The resulting complexity, of order $T \prod_{i=1}^N |S_i|$, is prohibitive even for moderate values of $N$. In contrast, the complexity of each iteration of the stochastic Frank-Wolfe algorithm is linear with respect to $N$, while the accuracy of the algorithm improves as $N$ increases.
\end{Remark}


\section{MICP formulation} \label{sec:micp}

We give another equivalent problem of \eqref{pb:OC} in this section as a mixed integer convex program (MICP).
For any $t \in \{ 0,\ldots, T\}$, we denote $Z_i^t =\{(s_i^t,u_i^t) \in S_i \times U_i\, \mid \, u_i^t \in U_i^t(s) \}$. The optimization variable are denoted $m= (m_i^t(s_i^t,u_i^t))$ with indices $i \in \{ 1,\dots, N \}$, $t \in \{ 0, \ldots, T \}$, $(s_i^t,u_i^t) \in Z_i^t$. The criterion is defined as:
\begin{align*}
\bar{J}(m) \coloneqq &
\sum_{t=0}^T
f_t \Big(
\frac{1}{N} \sum_{i=1}^N
\sum_{z_i^t \in Z_i^t} h_i^t(z_i^t) m_i^t(z_i^t) \Big) \\
& + \frac{1}{N} \sum_{t=0}^T \sum_{i=1}^N \sum_{z_i^t \in Z_i^t} \ell_i^t(z_i^t) m_i^t(z_i^t),
\end{align*}
where for simplicity we have written $z_i^t$ instead of $(s_i^t,u_i^t)$.
We call now (MICP) the problem which consists in minimizing $\bar{J}(m)$ over the variables $m$ satisfying the following constraints:
\begin{align*}
\text{(i)} \ \ & m_i^t(s_i^t,u_i^t) \in \mathbb{Z}, \\
\text{(ii)} \ \ & m_i^t(s_i^t,u_i^t) \geq 0, \quad \sum_{z_i^t \in Z_i^t} m_i^t(z_i^t) = 1, \\
\text{(iii)} \ \ & m_i^0(\hat{s}_i^0,u_i^0)= 0, \\
\text{(iv)} \ \ & \sum_{u_i^\theta \in U_i^\theta(s_i^\theta)} m_i^\theta(s_i^\theta,u_i^\theta)
= \sum_{z_i^{\theta-1} \in (\pi_i^\theta)^{-1}(s_i^{\theta})} m_i^{\theta-1}(z_i^{\theta-1}),
\end{align*}
for any $i=1,\ldots,N$, $t=0,\ldots,T$, $(s_i^t,u_i^t) \in Z_i^t$, $\hat{s}_i^0 \in S_i \backslash S_i^0$, $\theta = 1, \ldots, T$, and $s_i^\theta \in S_i$.
The terminology MICP comes from the fact that if the integrity constraint in the problem, constraint (i), then the problem becomes a nonlinear convex program: indeed, $\bar{J}$ is a convex function and constraints (ii)-(iii)-(iv) are affine.
Let us emphasize the fact that in problem \eqref{pb:OC}, the states $s_i^t$ and the controls $u_i^t$ are directly optimized, while here, we optimize a variable $m$ indexed by all possible states and controls.
The following lemma shows the equivalence between \eqref{pb:OC} and (MICP).

\begin{lemma}
Problems \eqref{pb:OC} and (MICP) have the same value; moreover, from any solution to \eqref{pb:OC}, a solution to (MICP) can be deduced and vice versa.
\end{lemma}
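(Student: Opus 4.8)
The plan is to establish a bijection between the feasible state-control trajectories in $\mathcal{X}_i$ and the feasible variables $m$ of (MICP) that are \emph{deterministic}, i.e., for which each $m_i^t(\cdot,\cdot)$ places all its mass on a single admissible pair. Since constraint (i) forces $m_i^t(z_i^t) \in \mathbb{Z}$ and constraints (ii) force $m_i^t(z_i^t)\geq 0$ together with $\sum_{z_i^t} m_i^t(z_i^t)=1$, any feasible $m$ automatically has exactly one entry equal to $1$ and all others equal to $0$ for each pair $(i,t)$. Thus the integrity constraint reduces each $m_i^t$ to an indicator of a chosen state-control pair $(s_i^t,u_i^t)\in Z_i^t$. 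I would first make this observation explicit: a feasible $m$ encodes, for each agent $i$ and each time $t$, a unique pair $(s_i^t,u_i^t)$ with $u_i^t \in U_i^t(s_i^t)$.

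Next I would verify that the remaining constraints (iii) and (iv) precisely enforce the dynamic feasibility that defines $\mathcal{X}_i$. Constraint (iii), $m_i^0(\hat s_i^0, u_i^0)=0$ for all $\hat s_i^0 \in S_i\setminus S_i^0$, says the selected initial state lies in $S_i^0$, matching $s_i^0 \in S_i^0$. Constraint (iv) is a flow-conservation (Kolmogorov) identity: the mass leaving state $s_i^\theta$ at time $\theta$ equals the mass arriving at $s_i^\theta$ from time $\theta-1$ via the transition map, where the preimage sum is taken over $z_i^{\theta-1}\in(\pi_i^{\theta})^{-1}(s_i^\theta)$. For a deterministic $m$, this forces the chosen state at time $\theta$ to equal $\pi_i^{\theta-1}(s_i^{\theta-1},u_i^{\theta-1})$, i.e., exactly the transition constraint $s_i^{\theta+1}=\pi_i^\theta(s_i^\theta,u_i^\theta)$ (after index shift). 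I would argue both directions: given a trajectory in $\mathcal{X}_i$, setting $m_i^t$ to be the indicator of $(s_i^t,u_i^t)$ satisfies (i)--(iv); conversely, reading off the unique supported pairs from a feasible $m$ yields a trajectory in $\mathcal{X}_i$.

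With the bijection established, I would check that the objective values coincide under this correspondence. For a deterministic $m$ associated to a trajectory $(s,u)$, the inner sums $\sum_{z_i^t\in Z_i^t} h_i^t(z_i^t)m_i^t(z_i^t)$ collapse to $h_i^t(s_i^t,u_i^t)$, and likewise $\sum_{z_i^t} \ell_i^t(z_i^t)m_i^t(z_i^t)=\ell_i^t(s_i^t,u_i^t)$. Hence $\bar J(m)=J(s,u)$ term by term, so the two problems have identical objective along corresponding points. Combining the feasibility bijection with this objective identity gives equality of optimal values and the claimed transfer of solutions in both directions.

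The main obstacle, and the step requiring the most care, is the correct handling of constraint (iv), specifically confirming that the preimage $(\pi_i^\theta)^{-1}(s_i^\theta)$ and the index conventions ($\theta$ versus $\theta-1$, and $t$ versus $t+1$) line up so that flow conservation genuinely reproduces the deterministic transition $s_i^{\theta+1}=\pi_i^\theta(s_i^\theta,u_i^\theta)$. I would write out one time step of (iv) explicitly for a deterministic $m$ to confirm the bookkeeping, since a sign or index slip here would break the equivalence. Everything else is routine once the indicator structure of feasible $m$ is recognized.
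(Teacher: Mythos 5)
Your proposal is correct and follows essentially the same route as the paper's proof: both directions map a trajectory to the indicator measure $m$ and, conversely, use constraints (i)--(ii) to extract a unique supported pair per $(i,t)$, with (iii) and (iv) recovering the initial-state and transition constraints, and the objectives matching term by term. The paper's proof is just a terser version of the same argument.
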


\begin{proof}
Let $(\bar{s}, \bar{u})$ be a solution of \eqref{pb:OC}. Take
\begin{equation*}
    m_t^i(s_i^t,u_i^t) = \begin{cases}
    1, \qquad &\text{if } (s_i^t,u_i^t) = (\bar{s}_i^t, \bar{u}_i^t),
    \\
    0, & \text{otherwise}.
    \end{cases}
\end{equation*}
We can verify that $m$ is feasible for (MICP) and that $J(\bar{s},\bar{u}) = \bar{J}(m)$.

Let $\bar{m}$ be a solution of (MICP). From the constraints (i-ii), we deduce that for any $(t,i)$, there exists a unique $(s_i^t,u_i^t) \in Z_t^i$, such that $\bar{m}(t,i,s_i^t,u_i^t) = 1$. Constraint (iii) implies that $s_i^0 \in S_i^0$. Finally constraint (iv) implies that $s_i^{t+1} = \pi_i^t(s^t_i,u^t_i)$. We also have $J(s,u) = \bar{J}({\bar{m}})$. The conclusion follows.
\end{proof}


\section{Application example}
\label{sec:app}

Let us now turn to the problem of the charging of a fleet of batteries. We propose a very simple model which is essentially illustrative, rather than realistic. 
However, it is emphasised that the proposed approach can easily incorporate more realistic constraints on battery operation (e.g. taking into account limits on cycles numbers). Indeed, these refinements remain localized at the sub-problem level (impacting only the dynamic programming Algorithm~\ref{algo:dyn_prog}). They consist either in adding a state variable or in modifying the local costs in order to penalise undesired behaviour.
Suppose that there are $N$ batteries to be charged. Let $s_i^t$ be the state of charge (SoC) for the battery $i$ at the time $t$.

\subsection{Dynamics of the batteries}

The dynamics of each battery is characterized by three parameters: an initial state of charge ${s}^{\text{in}}_i \in \mathbb{N}$, a maximal state of charge $s_i^\text{{max}} \in \mathbb{N}$, a maximal load speed $u_i^{\text{max}} \in \mathbb{N}$.
We define:
\begin{align*}
& S_i= \{ s_i^{\text{in}},\ldots,s_i^{\text{max}} \},
\ S_i^0 = \{ s_i^{\text{in}}\}, \ U_i= \{ 0,\dots,u_i^{\text{max}} \}, \\
& U_i^t(s_i^t)= \{ 0,\ldots, \min(u_i^{\max}, s_i^{\text{max}}- s_i^t) \}, \\
& \pi_i^t(s_i^t,u_i^t)= s_i^t + u_i^t.
\end{align*}
In words: the initial condition $s_i^{\text{in}}$ is given, the charging of the battery is additive, the charging speed is bounded by $u_i^{\text{max}}$ and is such that $s_i^t$ can never exceed $s_i^{\max}$.

\subsection{Cost and contribution functions}

Some positive coefficients $(\beta_i)_{i=1,\ldots,N}$, $(\alpha_t)_{t=0,\ldots,T-1}$, and $(c_t)_{t=0,\ldots,T-1}$ are given. The individual costs are
\begin{align*}
\ell_i^t(s_i^t,u_i^t)= \ & 0, \quad \forall t=0,\ldots,T-1, \\
\ell_i^T(s_i^T,u_i^T)= \ & \beta_i (s_i^{\text{max}}- s_i^T)^2.
\end{align*}
The contributions are defined by $h_i^T(s_i^T,u_i^T)= 0$ and
\begin{align*}
h_i^t(s_i^t,u_i^t)= u_i^t, \quad \forall t=0,\ldots,T-1.
\end{align*}
The social costs $f_t$ are defined by $f_T(y_T)=0$ and
\begin{align*}
f^t(y_t)= \alpha^t(y_t - c_t)^2, \quad \forall t=0,\ldots,T-1.
\end{align*}
Therefore, the cost function $J$ reads
\begin{align*}
\sum_{t=0}^{T-1}
\alpha^t \Bigg( \Big( \frac{1}{N} \sum_{i=1}^N u_i^t \Big) - c^t \Bigg)^2 + \frac{1}{N} \sum_{i=1}^N \beta_i \left( s_i^T - s_i^{\text{max}} \right)^2.
\end{align*}
The cost function has two contributions, one depends on the average of charging levels of all the batteries, the other one depends on the individual final SoC of each battery. To be more precise, for $t\leq T-1$, the average charging level needs to approach some target power $c_t$. For $t= T$, the batteries expect to approach their maximum SoCs.

\subsection{Numerical simulations}

The parameters are chosen as follows:
\begin{itemize}
\item $N=100$, $T=24 $
\item $s_i^{\text{in}}$ (resp.\@ $s_i^{\text{max}}$) is chosen randomly and uniformly in $\{ 0,1,\ldots, 20 \}$ (resp.\@ $\{ 20,21,\ldots, 40 \}$), $u_i^{\text{max}}= 4$
\item $\alpha^t$ is chosen randomly and uniformly in $[1,2]$, $\beta_i$ is chosen randomly and uniformly in $[0,1] $
\item $c^t= 1.5 \lfloor \sin (\pi t /12)+1 \rfloor$.
\end{itemize}
Thus, for $t = 0,1 ,\ldots,23$,  the diameter of the range set $Y_{it}$ is less than $u_i^{\text{max}} = 4$, and the Lipschitz constant $ \tilde{L}_t$ is $2 \alpha^t $, which is less than $4$. Then, we have the following upper bound for the relaxation gap $C_1 / 2N$:
\begin{align*}
    \frac{C_1}{2N}  \leq \frac{1}{200} \cdot \frac{1}{100} \cdot \sum_{t=0}^{23} \left(4 \cdot \sum_{i=1}^{100} 4^2 \right) = 7,68 . 
\end{align*}

Let us estimate the number of variables in the MICP corresponding to this example, that we denote by $d(m)$. First,
$|Z_{i}^t|\geq (s_i^{\text{max}}-s_i^{\text{in}} - u_i^{\text{max}})  (u_i^{\text{max}} +1 )$.
Then, in view of the distributions of $s_i^{\text{max}}$ and $s_i^{\text{in}}$,
\begin{equation*}
\mathbb{E}(d(m)) \geq N T \mathbb{E}(|Z_{i}^t|) \geq 100 \cdot 24 \cdot 16 \cdot 5 = 192\, 000,
\end{equation*}
making the MICP approach intractable.

Fig.\@ \ref{fig1} shows the outcome of Algorithm \ref{alg:fw_geo} with $500$ iterations to get an approximation of the minimum $\mathcal{J}^*$ of the relaxed problem. The curve represents the relaxed cost. Fig.\@ \ref{fig2} shows the outcome of Algorithm \ref{alg1+k}, for different  choices of $n_k$ with 100 iterations. Since the algorithm is stochastic, 
we ran it 50 times independently to evaluate its efficiency; 
the curves represent the average value of $\gamma_k= J(x^k)-\mathcal{J}^*$. The standard deviation is displayed on Fig.\@ \ref{fig3}. In all cases, an average value of the gap significantly smaller than $7,68$ can be reached; the standard deviation is also significantly smaller than $7,68$ at the last iterations. We have initialized the algorithm with values of $x_i^0$ such that $u_i^t= 0$, for any $t=0,\ldots,T-1$.


\begin{figure}[htbp]
\centerline{\includegraphics[width= 3.3 in]{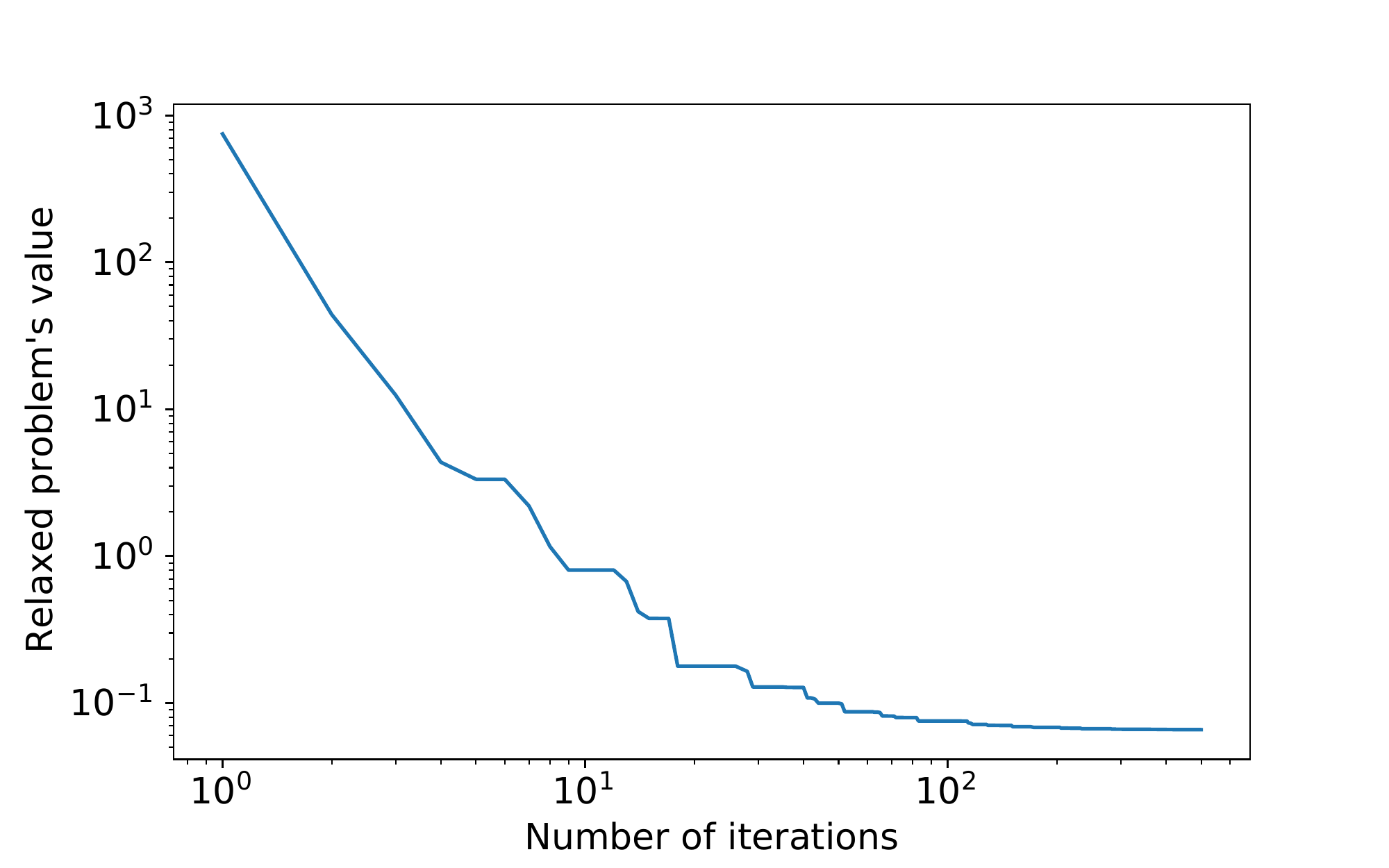}}
\caption{Frank-Wolfe Algorithm with $500$ iterations for the relaxed problem.}
\label{fig1}
\end{figure}

\begin{figure}[htbp]
\centerline{\includegraphics[width= 3.3 in]{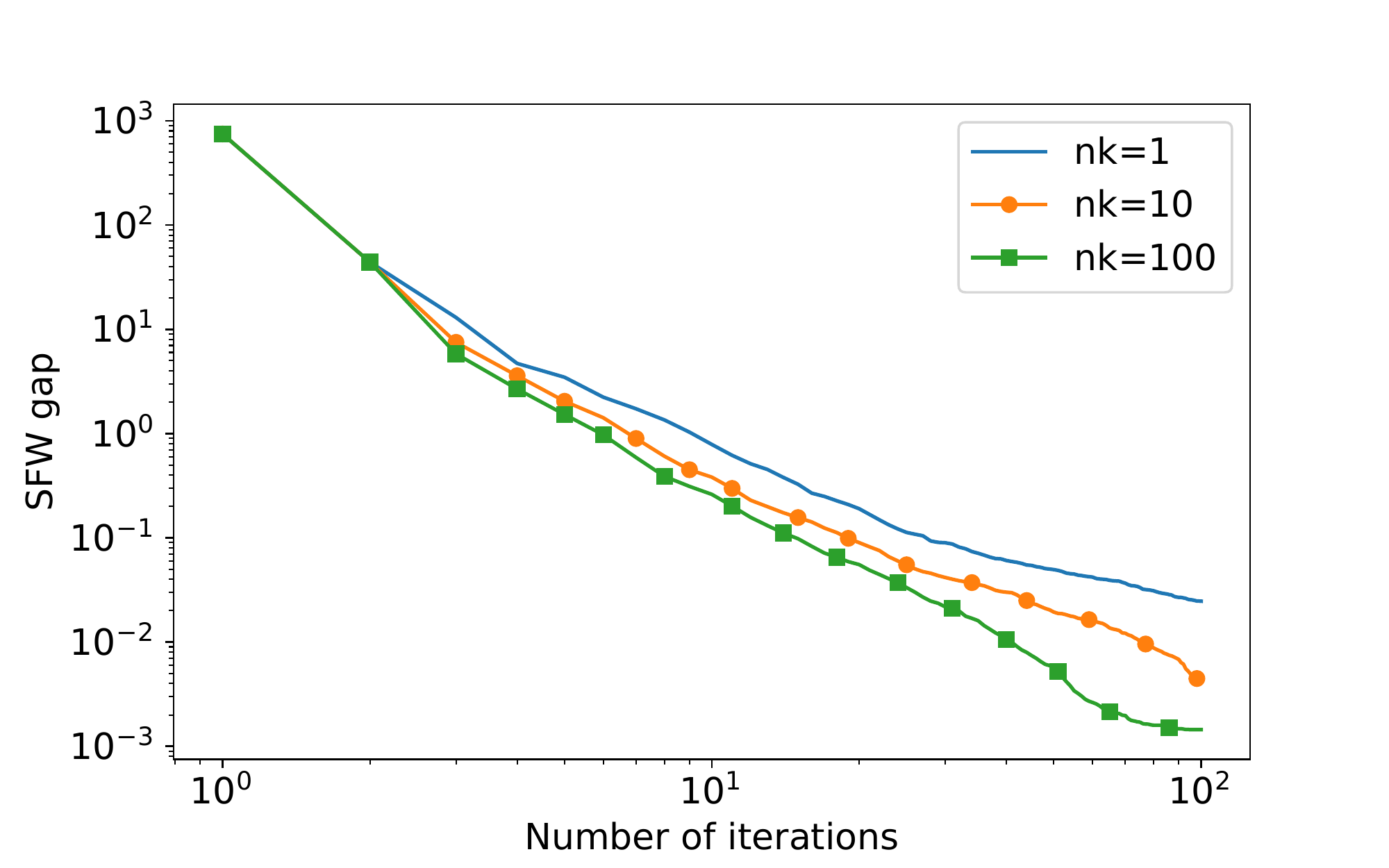}}
\caption{Stochastic Frank-Wolfe Algorithm with $100$ iterations, expectation of the gap.}
\label{fig2}
\end{figure}

\begin{figure}[htbp]
\centerline{\includegraphics[width= 3.3 in]{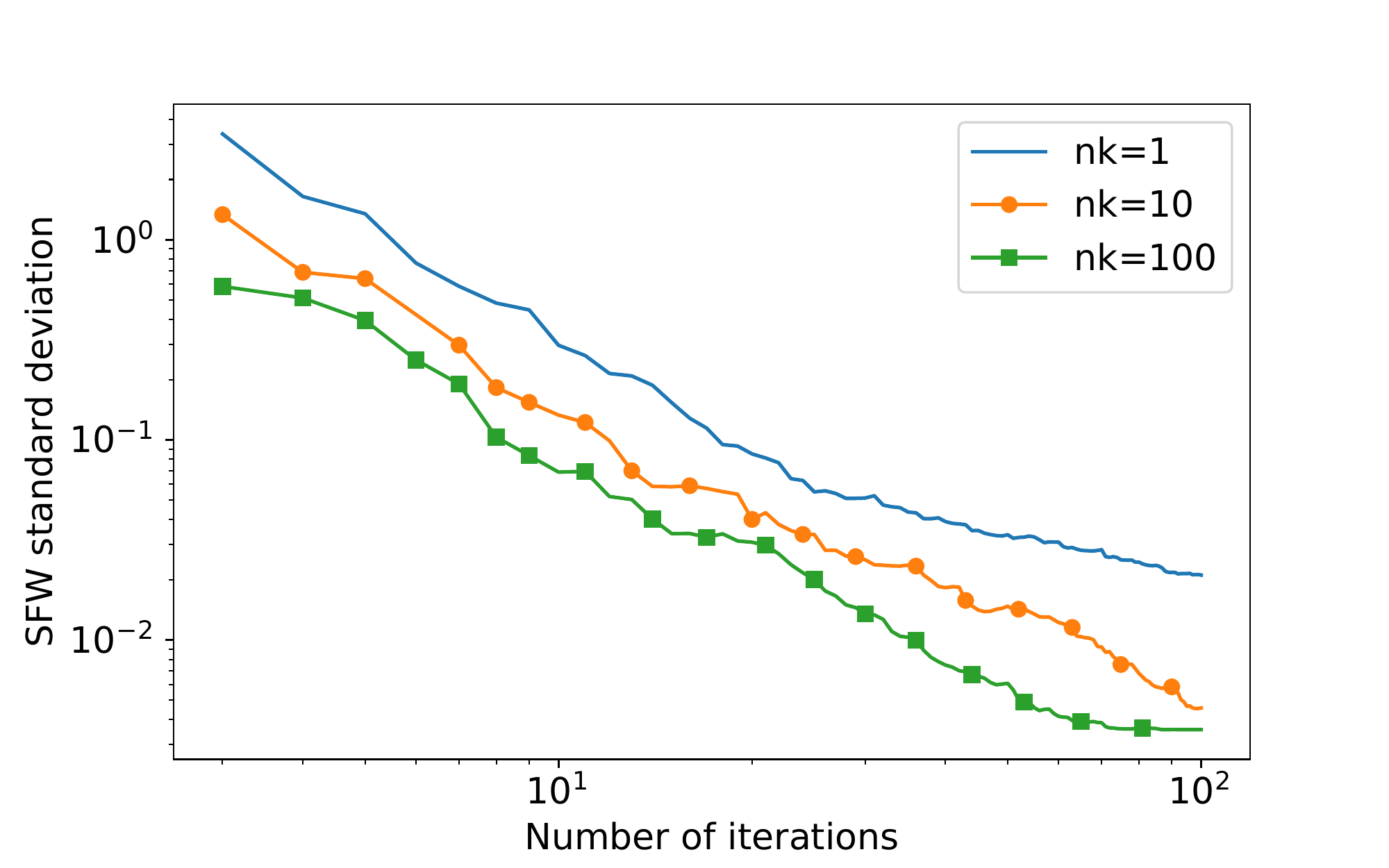}}
\caption{Stochastic Frank-Wolfe Algorithm with $100$ iterations, standard deviation of the gap.}
\label{fig3}
\end{figure}

\bibliographystyle{plain}
\bibliography{biblio}
\end{document}